\DeclareFontFamily{OMX}{lmex}{}
\DeclareFontShape{OMX}{lmex}{m}{n}{<->lmex10}{}
\theoremstyle{plain}
\newtheorem{theo}{Theorem}
\newtheorem{coro}[theo]{Corollary}
\newtheorem{prop}[theo]{Proposition}
\newtheorem{lemm}[theo]{Lemma}
\theoremstyle{remark}
\newtheorem*{rema}{Remark}
\newtheorem*{ques}{Question}
\DeclareMathOperator{\Hom}{Hom}
\DeclareMathOperator{\Ext}{Ext}
\DeclareMathOperator{\Mod}{Mod}
\DeclareMathOperator{\Ind}{Ind}
\DeclareMathOperator{\Ord}{Ord}
\DeclareMathOperator{\Lr}{L}
\DeclareMathOperator{\Rr}{R}
\DeclareMathOperator{\Cr}{C}
\DeclareMathOperator{\Hr}{H}
\DeclareMathOperator{\Ir}{I}
\DeclareMathOperator{\Cc}{\mathcal{C}}
\DeclareMathOperator{\car}{char}
\DeclareMathOperator{\cor}{cor}
\newcommand{\iso}{\xrightarrow{\sim}}
\newcommand{\F}{\mathbb{F}}
\newcommand{\Gb}{\mathbf{G}}
\newcommand{\Pb}{\mathbf{P}}
\newcommand{\Mb}{\mathbf{M}}
\newcommand{\Nb}{\mathbf{N}}
\newcommand{\Zb}{\mathbf{Z}}
\newcommand{\Bb}{\mathbf{B}}
\newcommand{\Sb}{\mathbf{S}}
\newcommand{\GL}{\mathrm{GL}}
\newcommand{\St}{\mathrm{St}}
\newcommand{\adm}{\mathrm{adm}}
\newcommand{\lfin}{\mathrm{l.fin}}
\newcommand{\crm}{\mathrm{c}}
\newcommand{\erm}{\mathrm{e}}
\title[On the exactness of ordinary parts]{On the exactness of ordinary parts over a local field of characteristic $p$}
\subjclass[2010]{22E50}
\keywords{Local fields, reductive groups, admissible smooth representations, parabolic induction, ordinary parts, extensions}
\thanks{This research was partly supported by EPSRC grant EP/L025302/1.}
\author[J.~Hauseux]{Julien Hauseux}
\address{
Université de Lille\\
Département de Mathématiques\\
Cité scientifique, Bâtiment M2\\
59655 Villeneuve d'Ascq Cedex\\
France
}
\email{\href{mailto:julien.hauseux@math.univ-lille1.fr}{julien.hauseux@math.univ-lille1.fr}}
\begin{document}

\begin{abstract}
Let $G$ be a connected reductive group over a non-archimedean local field $F$ of residue characteristic $p$, $P$ be a parabolic subgroup of $G$, and $R$ be a commutative ring.
When $R$ is artinian, $p$ is nilpotent in $R$, and $\mathrm{char}(F)=p$, we prove that the ordinary part functor $\mathrm{Ord}_P$ is exact on the category of admissible smooth $R$-representations of $G$.
We derive some results on Yoneda extensions between admissible smooth $R$-representations of $G$.
\end{abstract}

\maketitle

\section{Results}

Let $F$ be a non-archimedean local field of residue characteristic $p$.
Let $\Gb$ be a connected reductive algebraic $F$-group and $G$ denote the topological group $\Gb(F)$.
We let $\Pb=\Mb\Nb$ be a parabolic subgroup of $\Gb$.
We write $\bar\Pb=\Mb\bar\Nb$ for the opposite parabolic subgroup.

Let $R$ be a commutative ring.
We write $\Mod_G^\infty(R)$ for the category of smooth $R$-representations of $G$ (i.e.\@ $R[G]$-modules $\pi$ such that for all $v \in \pi$ the stabiliser of $v$ is open in $G$) and $R[G]$-linear maps.
It is an $R$-linear abelian category.
When $R$ is noetherian, we write $\Mod_G^\adm(R)$ for the full subcategory of $\Mod_G^\infty(R)$ consisting of admissible representations (i.e.\@ those representations $\pi$ such that $\pi^H$ is finitely generated over $R$ for any open subgroup $H$ of $G$).
It is closed under passing to subrepresentations and extensions, thus it is an $R$-linear exact subcategory, but quotients of admissible representations may not be admissible when $\car(F)=p$ (see \cite[Example 4.4]{AHV}).

Recall the smooth parabolic induction functor $\Ind_{\bar P}^G : \Mod_M^\infty(R) \to \Mod_G^\infty(R)$, defined on any smooth $R$-representation $\sigma$ of $M$ as the $R$-module $\Ind_{\bar P}^G(\sigma)$ of locally constant functions $f : G \to \sigma$ satisfying $f(m\bar ng) = m \cdot f(g)$ for all $m \in M$, $\bar n \in \bar N$, and $g \in G$, endowed with the smooth action of $G$ by right translation.
It is $R$-linear, exact, and commutes with small direct sums.
In the other direction, there is the ordinary part functor $\Ord_P : \Mod_G^\infty(R) \to \Mod_M^\infty(R)$ (\cite{Em1,VigAdj}).
It is $R$-linear and left exact.
When $R$ is noetherian, $\Ord_P$ also commutes with small inductive limits, both functors respect admissibility, and the restriction of $\Ord_P$ to $\Mod_G^\adm(R)$ is right adjoint to the restriction of $\Ind_{\bar P}^G$ to $\Mod_M^\adm(R)$.

\begin{theo} \label{theo:exact}
If $R$ is artinian, $p$ is nilpotent in $R$, and $\car(F)=p$, then $\Ord_P$ is exact on $\Mod_G^\adm(R)$.
\end{theo}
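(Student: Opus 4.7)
The functor $\Ord_P$ is already left exact, so the statement reduces to showing surjectivity of $\Ord_P(\pi_2) \to \Ord_P(\pi_3)$ for every short exact sequence $0 \to \pi_1 \to \pi_2 \to \pi_3 \to 0$ in $\Mod_G^{\adm}(R)$. My plan is to exploit the explicit realisation of $\Ord_P$ due to Emerton and Vignéras: fix a compact open subgroup $N_0 \subseteq N$ and an element $z$ in the centre of $M$ which strictly contracts $N_0$ (so $zN_0z^{-1} \subsetneq N_0$, $\bigcap_n z^n N_0 z^{-n} = 1$, and $\bigcup_n z^{-n} N_0 z^n = N$), and consider the Hecke operator $T_z$ on $\pi^{N_0}$ defined by $T_z(v) = \sum_{u \in N_0/zN_0z^{-1}} u z v$. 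Because $R$ is artinian and $\pi$ is admissible, $\pi^{N_0}$ has finite $R$-length, so the Fitting decomposition for $T_z$ identifies $\Ord_P(\pi)$ with the largest $R$-submodule of $\pi^{N_0}$ on which $T_z$ acts invertibly.

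From the short exact sequence we obtain a $T_z$-equivariant long exact sequence of $N_0$-cohomology
\[0 \to \pi_1^{N_0} \to \pi_2^{N_0} \to \pi_3^{N_0} \xrightarrow{\delta} \Hr^1(N_0, \pi_1) \to \cdots.\]
The submodule $C := \delta(\pi_3^{N_0}) \subseteq \Hr^1(N_0, \pi_1)$ is a quotient of the finite-length $R$-module $\pi_3^{N_0}$, hence of finite $R$-length itself, and inherits a $T_z$-action. Since the passage to the $T_z$-invertible Fitting component is exact on finite-length $R[T_z]$-modules, surjectivity of $\Ord_P(\pi_2) \to \Ord_P(\pi_3)$ is equivalent to the vanishing of that component of $C$. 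It thus suffices to prove that $T_z$ acts locally nilpotently on $\Hr^1(N_0, \pi_1)$.

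This last vanishing is the main obstacle, and it is where the hypothesis $\car(F) = p$ is indispensable. Using the factorisation $T_z^n = \cor_{z^n N_0 z^{-n}}^{N_0} \circ \alpha_{z^n}$ on cohomology, where $\alpha_{z^n} : \Hr^\ast(N_0, \pi_1) \iso \Hr^\ast(z^n N_0 z^{-n}, \pi_1)$ is the isomorphism induced by conjugation by $z^n$ together with the action of $z^n$ on $\pi_1$, the image of $T_z^n$ on $\Hr^\ast(N_0, \pi_1)$ is contained in the image of $\cor_{z^n N_0 z^{-n}}^{N_0}$. When $\car(F) = p$ the group $N$ is locally pro-$p$ of $p$-torsion type, rather than $p$-adic analytic, so the finite $p$-group quotients $N_0/z^n N_0 z^{-n}$ exhaust $N_0$ in a controlled way. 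Combined with the nilpotence of $p$ in $R$ — which makes the identity $\cor \circ \mathrm{res} = [N_0 : z^n N_0 z^{-n}] \cdot \mathrm{id}$ vanish in degree $\geq 1$ — and the admissibility of $\pi_1$, a direct cocycle analysis should force $\bigcap_n \mathrm{image}(\cor_{z^n N_0 z^{-n}}^{N_0})$ on $\Hr^1$ to be zero. In characteristic $0$ no analogous vanishing holds in general, so the argument must genuinely use the positive-characteristic structure of $N$.
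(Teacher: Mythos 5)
Your reduction is exactly the one the paper uses: left exactness plus the identification (valid for $R$ artinian and $\pi$ admissible) of $\Ord_P(\pi)$ with the $z$-invertible Fitting component of $\pi^{N_0}$ reduces the theorem to showing that the Hecke action of $z$ on $\Hr^1(N_0,\pi_1)$ is locally nilpotent. That statement is true (it is Lemma~\ref{lemm:lnil} of the paper), but the argument you sketch for it does not work, and it is where all the content of the theorem lies. The formal inputs you invoke --- the factorisation of $T_z^n$ through $\cor^{N_0}_{z^nN_0z^{-n}}$, the identity $\cor\circ\mathrm{res}=[N_0:z^nN_0z^{-n}]\cdot\mathrm{id}$, and the vanishing of this $p$-power index in $R$ for $n$ large --- hold equally in degree $0$, where the conclusion is false: the image of $T_z^n$ on $\pi^{N_0}=\Hr^0(N_0,\pi)$ is precisely what survives into $\Ord_P(\pi)$, which is nonzero in general. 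The relation $\cor\circ\mathrm{res}=0$ constrains only the image of restriction, not the image of corestriction, so no cocycle analysis of this purely formal kind can force $\bigcap_n\mathrm{image}(\cor^{N_0}_{z^nN_0z^{-n}})$ to vanish on $\Hr^1$; a correct argument must isolate what genuinely distinguishes degree $\geq 1$ from degree $0$.

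The paper supplies this in two steps, both absent from your sketch. First, Lemma~\ref{lemm:hecke} is a transfer statement: if the Hecke action of $m$ on $\pi^{N_0}$ is locally nilpotent, then so is its action on $\Hr^k(N_0,\pi)$ for all $k$. Its proof is an induction on $\dim\Nb$ exploiting the fact that in characteristic $p$ the split unipotent group $\Nb$ contains a central $p$-torsion vector subgroup $\Nb'$, so that $N'_0/mN'_0m^{-1}$ admits a group-theoretic section and the operator $S=\sum_{\bar n}\bar nm$ nearly commutes with $\F_p[N'_0]$; one then combines a finite-dimensionality argument over $\F_p$ with the Hochschild--Serre spectral sequence. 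This is where $\car(F)=p$ and the nilpotence of $p$ actually enter. Second, since an admissible $\pi_1$ does \emph{not} have locally nilpotent Hecke action on $\pi_1^{N_0}$, the transfer lemma cannot be applied to $\pi_1$ directly: Lemma~\ref{lemm:lnil} compares $\pi_1$ with $\Cc_\crm^\infty(N,\Ord_P(\pi_1))$ via the counit of the adjunction, observes that this representation is $N_0$-acyclic and that the kernel and cokernel of the comparison map \emph{do} have locally nilpotent Hecke action on their $N_0$-invariants, and then chases the two resulting long exact sequences. Without a substitute for both of these steps your proof is incomplete at its decisive point.
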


Thus the situation is very different from the case $\car(F)=0$ (see \cite{Em2}).
On the other hand if $R$ is artinian and $p$ is invertible in $R$, then $\Ord_P$ is isomorphic on $\Mod_G^\adm(R)$ to the Jacquet functor with respect to $P$ (i.e.\@ the $N$-coinvariants) twisted by the inverse of the modulus character $\delta_P$ of $P$ (\cite[Corollary 4.19]{AHV}), so that it is exact on $\Mod_G^\adm(R)$ without any assumption on $\car(F)$.

\begin{rema}
Without any assumption on $R$, $\Ind_P^G : \Mod_M^\infty(R) \to \Mod_G^\infty(R)$ admits a left adjoint $\Lr_P^G : \Mod_G^\infty(R) \to \Mod_M^\infty(R)$ (the Jacquet functor with respect to $P$) and a right adjoint $\Rr_P^G : \Mod_G^\infty(R) \to \Mod_M^\infty(R)$ (\cite[Proposition 4.2]{VigAdj}).
If $R$ is noetherian and $p$ is nilpotent in $R$, then $\Rr_P^G$ is isomorphic to $\Ord_{\bar P}$ on $\Mod_G^\adm(R)$ (\cite[Corollary 4.13]{AHV}).
Thus under the assumptions of Theorem \ref{theo:exact}, $\Rr_P^G$ is exact on $\Mod_G^\adm(R)$.
On the other hand if $R$ is noetherian and $p$ is invertible in $R$, then $\Rr_P^G$ is expected to be isomorphic to $\delta_P \Lr_{\bar P}^G$ (`second adjointness'), and this is proved in the following cases: when $R$ is the field of complex numbers (\cite{Ber}) or an algebraically closed field of characteristic $\ell \neq p$ (\cite[II.3.8 2)]{VigLivre}); when $\Gb$ is a Levi subgroup of a general linear group or a classical group with $p \neq 2$ (\cite[Théorème 1.5]{Dat}); when $\Pb$ is a minimal parabolic subgroup of $\Gb$ (see also \cite{Dat}).
In particular, $\Lr_P^G$ and $\Rr_P^G$ are exact in all these cases.
\end{rema}

\begin{ques}
Are $\Lr_P^G$ and $\Rr_P^G$ exact when $R$ is noetherian, $p$ is nilpotent in $R$, and $\car(F)=p$?
\end{ques}

We derive from Theorem \ref{theo:exact} some results on Yoneda extensions between admissible $R$-representations of $G$.
We compute the $R$-modules $\Ext_G^\bullet$ in $\Mod_G^\adm(R)$.

\begin{coro} \label{coro:ext}
Assume $R$ artinian, $p$ nilpotent in $R$, and $\car(F)=p$.
Let $\sigma$ and $\pi$ be admissible $R$-representations of $M$ and $G$ respectively.
For all $n \geq 0$, there is a natural $R$-linear isomorphism
\begin{equation*}
\Ext_M^n(\sigma,\Ord_P(\pi)) \iso \Ext_G^n(\Ind_{\bar P}^G(\sigma),\pi).
\end{equation*}
\end{coro}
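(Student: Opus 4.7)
The plan is to construct mutually inverse natural $R$-linear maps via the Yoneda description of $\Ext^n$ in the exact categories $\Mod_G^\adm(R)$ and $\Mod_M^\adm(R)$, using the exactness of $\Ind_{\bar P}^G$ (automatic) and of $\Ord_P$ (which is Theorem \ref{theo:exact}) together with the adjunction $(\Ind_{\bar P}^G, \Ord_P)$ on admissibles recalled before the theorem. The case $n=0$ is simply that adjunction.

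For $n \geq 1$, I would send a Yoneda extension $0 \to \Ord_P(\pi) \to E_{n-1} \to \cdots \to E_0 \to \sigma \to 0$ to the class obtained by applying $\Ind_{\bar P}^G$ termwise and then pushing out the leftmost term along the counit $\epsilon_\pi : \Ind_{\bar P}^G(\Ord_P(\pi)) \to \pi$. Symmetrically, I would send a Yoneda extension $0 \to \pi \to F_{n-1} \to \cdots \to F_0 \to \Ind_{\bar P}^G(\sigma) \to 0$ to the image of $\Ord_P$ applied termwise, pulled back along the unit $\eta_\sigma : \sigma \to \Ord_P(\Ind_{\bar P}^G(\sigma))$. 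Both constructions stay inside the admissible subcategory: $\Ind_{\bar P}^G$ and $\Ord_P$ both preserve admissibility, the pullback embeds in a product of admissibles, and the pushout lies in a short exact sequence whose outer terms are admissible (and thus is admissible by closure of admissibles under extensions), so the same is true for the successive kernels/cokernels appearing further down the $n$-fold sequence.

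Checking that these assignments descend to $\Ext^n$ and are mutually inverse is then formal: the composition in either direction reduces via the universal properties of pullback and pushout to $\Ord_P(\epsilon_\pi) \circ \eta_{\Ord_P(\pi)}$ or $\epsilon_{\Ind_{\bar P}^G(\sigma)} \circ \Ind_{\bar P}^G(\eta_\sigma)$, both identities by the triangle identities of the adjunction, which readily yields Yoneda-equivalence with the original sequence. The one real obstacle is the mere existence of the backward map: without the exactness of $\Ord_P$ supplied by Theorem \ref{theo:exact}, applying $\Ord_P$ termwise to a Yoneda extension would produce only a complex and not an extension, and the construction would collapse. Granted Theorem \ref{theo:exact}, the remainder is a direct verification.
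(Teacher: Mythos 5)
Your argument is correct and coincides with the explicit proof the paper itself supplies in the Remark following Corollary \ref{coro:ext}: there, too, the two maps are built from the exact functors $\Ind_{\bar P}^G$ and $\Ord_P$ together with the counit and unit, and shown to be mutually inverse by applying the other functor to the Yoneda ladder and invoking the unit--counit equations. The paper's headline proof is the shorter derived-category version (exact adjoint functors between the exact categories induce adjoint functors between $D(M)$ and $D(G)$, then identify $\Ext^n$ with $\Hom_{D(G)}(-,-[n])$), but both routes rest on exactly the same inputs, namely Theorem \ref{theo:exact} and the adjunction on admissible representations; your attention to admissibility of the pushout and pullback terms is a point the paper leaves implicit.
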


This is in contrast with the case $\car(F)=0$ (see \cite{JHD}).
A direct consequence of Corollary \ref{coro:ext} is that under the same assumptions, $\Ind_{\bar P}^G$ induces an isomorphism between the $\Ext^n$ for all $n \geq 0$ (Corollary \ref{coro:ind}).
When $R=C$ is an algebraically closed field of characteristic $p$ and $\car(F)=p$, we determine the extensions between certain irreducible admissible $C$-representations of $G$ using the classification of \cite{AHHV} (Proposition \ref{prop:triples}).
In particular, we prove that there exists no non-split extension of an irreducible admissible $C$-representation $\pi$ of $G$ by a supersingular $C$-representation of $G$ when $\pi$ is not the extension to $G$ of a supersingular representation of a Levi subgroup of $G$ (Corollary \ref{coro:sc}).
When $\Gb=\GL_2$, this was first proved by Hu (\cite[Theorem A.2]{Hu17}).

\subsection*{Acknowledgements}

The author would like to thank F.~Herzig and M.-F.~Vignéras for several comments on the first version of this paper.

\section{Proofs}

\subsection{Hecke action}

In this subsection, $\Mb$ denotes a linear algebraic $F$-group and $\Nb$ denotes a split unipotent algebraic $F$-group (see \cite[Appendix B]{CGP}) endowed with an action of $\Mb$ that we identify with the conjugation in $\Mb \ltimes \Nb$.
We fix an open submonoid $M^+$ of $M$ and a compact open subgroup $N_0$ of $N$ stable under conjugation by $M^+$.

If $\pi$ is a smooth $R$-representation of $M^+ \ltimes N_0$, then the $R$-modules $\Hr^\bullet(N_0,\pi)$, computed using the homogeneous cochain complex $\Cr^\bullet(N_0,\pi)$ (see \cite[§~I.2]{NSW}), are naturally endowed with the Hecke action of $M^+$, defined as the composite
\begin{equation*}
\Hr^\bullet(N_0,\pi) \xrightarrow{m} \Hr^\bullet(mN_0m^{-1},\pi) \xrightarrow{\cor} \Hr^\bullet(N_0,\pi)
\end{equation*}
for all $m \in M^+$.
At the level of cochains, this action is explicitly given as follows (see \cite[§~I.5]{NSW}).
We fix a set of representatives $\overline{N_0/mN_0m^{-1}} \subseteq N_0$ of the left cosets $N_0/mN_0m^{-1}$ and we write $n \mapsto \bar n$ for the projection $N_0 \twoheadrightarrow \overline{N_0/mN_0m^{-1}}$.
For $\phi \in \Cr^k(N_0,\pi)$, we have
\begin{multline} \label{lift}
(m \cdot \phi)(n_0,\dots,n_k) \\
= \sum_{\bar n \in \overline{N_0/mN_0m^{-1}}} \bar nm \cdot \phi(m^{-1}\bar n^{-1} n_0 \overline{n_0^{-1}\bar n}m,\dots,m^{-1}\bar n^{-1} n_k \overline{n_k^{-1}\bar n}m)
\end{multline}
for all $(n_0,\dots,n_k) \in N_0^{k+1}$.

\begin{lemm} \label{lemm:hecke}
Assume $p$ nilpotent in $R$ and $\car(F)=p$.
Let $\pi$ be a smooth $R$-representation of $M^+ \ltimes N_0$ and $m \in M^+$.
If the Hecke action $h_{N_0,m}$ of $m$ on $\pi^{N_0}$ is locally nilpotent (i.e.\@ for all $v \in \pi^{N_0}$ there exists $r \geq 0$ such that $h_{N_0,m}^r(v)=0$), then the Hecke action of $m$ on $\Hr^k(N_0,\pi)$ is locally nilpotent for all $k \geq 0$.
\end{lemm}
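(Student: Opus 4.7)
I would proceed by induction on $k \ge 0$. The case $k = 0$ is exactly the hypothesis.

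A useful preliminary observation is the identity
\begin{equation*}
h_{N_0, m}^r = \cor_{N_r}^{N_0} \circ (m^r \cdot) \quad \text{on } \Hr^\bullet(N_0, \pi),
\end{equation*}
where $N_r := m^r N_0 m^{-r} \subseteq N_0$, obtained by iteration using the compatibility of corestriction with conjugation by $m$. Note that $[N_0 : N_r] = [N_0 : N_1]^r$ is a power of $p$ since $N_0$ is pro-$p$ (thanks to $\car(F) = p$), and this power grows with $r$ as long as $m$ does not normalise $N_0$.

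Given a class $v \in \Hr^k(N_0, \pi)$ represented by a cocycle $\phi$, the smoothness of $\pi$ combined with the compactness of $N_0$ forces the image of $\phi$ to be finite, hence contained in $\pi^{N'}$ for some open normal subgroup $N' \trianglelefteq N_0$, with $\phi$ factoring through $(N_0/N')^{k+1}$. The aim would be to show that $\cor_{N_r}^{N_0}(m^r \cdot \phi)$ is cohomologous to $0$ for $r$ sufficiently large, exploiting the nilpotence of $p$ in $R$ and the large powers of $p$ appearing in the corestriction sum together with the local nilpotence hypothesis on $\pi^{N_0}$.

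An alternative strategy for the inductive step ($k \ge 1$) is to embed $\pi$ into an $R[M^+ \ltimes N_0]$-module $I$ that is $N_0$-acyclic in positive degrees --- for instance, a variant of coinduction from $N_0$ to a monoid containing $M^+$, realised as continuous $\pi$-valued functions on $N$ with appropriate $N_0$-equivariance and $M^+$-action given by conjugation $(m \cdot f)(n) := m \cdot f(m^{-1} n m)$ (well-defined as $M^+$ normalises $N$). The $M^+$-equivariant long exact sequence coming from $0 \to \pi \to I \to I/\pi \to 0$ then identifies $\Hr^k(N_0, \pi)$ with a quotient of $\Hr^{k-1}(N_0, I/\pi)$, to which the induction hypothesis applies provided the Hecke action on $(I/\pi)^{N_0}$ is still locally nilpotent.

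The main obstacle lies in (i) executing the cocycle-level computation, making precise how the local nilpotence on $\pi^{N_0}$ propagates to $\Hr^k(N_0, \pi)$ through the explicit formula \eqref{lift}; and (ii) for the acyclic-embedding approach, verifying the local nilpotence hypothesis for $I/\pi$, which via the degree-$0$ sequence $0 \to \pi^{N_0} \to I^{N_0} \to (I/\pi)^{N_0} \to \Hr^1(N_0, \pi) \to 0$ itself requires control of $\Hr^1(N_0, \pi)$. Hence the $k = 1$ case must be treated separately --- presumably by the direct cocycle argument above --- before the general inductive step can be carried out.
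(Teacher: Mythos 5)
Your proposal is a plan with two acknowledged open ends, and neither of them closes. The dimension-shifting route via $0 \to \pi \to I \to I/\pi \to 0$ is, as you yourself diagnose, circular: to apply the inductive hypothesis to $I/\pi$ you need the Hecke action on $(I/\pi)^{N_0}$ to be locally nilpotent, and the four-term sequence in degree $0$ only gives that after you already control $\Hr^1(N_0,\pi)$. You defer this to a ``direct cocycle argument,'' but that argument is never executed, and as sketched it would fail: the identity $h_{N_0,m}^r = \cor_{N_r}^{N_0}\circ(m^r\cdot)$ and the fact that $[N_0:N_r]$ is a power of $p$ buy you nothing, because the $[N_0:N_r]$ summands in formula \eqref{lift} are \emph{distinct} --- the arguments $m^{-1}\bar n^{-1} n_i \overline{n_i^{-1}\bar n}\,m$ fed to $\phi$ depend on the coset representative $\bar n$ --- so the corestriction sum is not $p^s$ times anything, and the nilpotence of $p$ in $R$ cannot be invoked that way. (In the paper it enters only through a separate, routine dévissage along $0 \to p\pi \to \pi \to \pi/p\pi \to 0$, reducing to $pR=0$.)

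The missing ideas are the following. The paper inducts not on the cohomological degree $k$ but on $\dim\Nb$, peeling off a central vector subgroup $\Nb'\subseteq\Nb$; this is where $\car(F)=p$ is genuinely used, since $N_0'$ is then a compact $\F_p$-vector space and one can choose the coset representatives $\overline{N_0'/mN_0'm^{-1}}$ to form a \emph{subgroup}, which makes the cochain formula for the Hecke action on $\Hr^\bullet(N_0',\pi)^{N_0''}$ collapse to a single operator $S=\sum_{\bar n}\bar n m$ applied to a value of $\phi$ whose argument no longer depends on the summation index. The second key point is a strengthening of the induction statement: one proves that $S$ acts locally nilpotently on \emph{all} of $\pi$, not merely on $\pi^{N_0}$ (via the finite-dimensionality argument $\pi_r = \F_p[N_0']\cdot(S^r v)$, using $\F_p[N_0']S\subseteq S\F_p[N_0']$ and the fact that a nonzero finite-dimensional $\F_p$-representation of the pro-$p$ group $N_0'$ has nonzero invariants). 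Only with this strengthening does the finiteness of the image of a locally constant cocycle yield nilpotence in higher degrees; one then assembles the result with the Hochschild--Serre spectral sequence $\Hr^i(N_0'',\Hr^j(N_0',\pi))\Rightarrow\Hr^{i+j}(N_0,\pi)$. Without the passage to the central subgroup and the strengthened statement on all of $\pi$, the cocycle-level computation you gesture at cannot be carried out.
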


\begin{proof}
First, we prove the lemma when $pR=0$, i.e.\@ $R$ is a commutative $\F_p$-algebra.
We assume that the Hecke action of $m$ on $\pi^{N_0}$ is locally nilpotent and we prove the result together with the following fact: there exists a set of representatives $\overline{N_0/mN_0m^{-1}} \subseteq N_0$ of the left cosets $N_0/mN_0m^{-1}$ such that the action of
\begin{equation*}
S \coloneqq \sum_{\bar n \in \overline{N_0/mN_0m^{-1}}} \bar nm \in \F_p[M^+ \ltimes N_0]
\end{equation*}
on $\pi$ is locally nilpotent.

We proceed by induction on the dimension of $\Nb$ (recall that $\Nb$ is split so that it is smooth and connected).
If $\Nb=1$, then the (Hecke) action of $m$ on $\pi^{N_0}=\pi$ is locally nilpotent by assumption, so that the result and the fact are trivially true.
Assume $\Nb \neq 1$ and that the result and the fact are true for groups of smaller dimension.
Since $\Nb$ is split, it admits a non-trivial central subgroup isomorphic to the additive group.
We let $\Nb'$ be the subgroup of $\Nb$ generated by all such subgroups.
It is a non-trivial vector group (i.e.\@ isomorphic to a direct product of copies of the additive group) which is central (hence normal) in $\Nb$ and stable under conjugation by $\Mb$ (since it is a characteristic subgroup of $\Nb$).
We set $\Nb'' \coloneqq \Nb/\Nb'$.
It is a split unipotent algebraic $F$-group endowed with the induced action of $\Mb$ and $\dim(\Nb'') < \dim(\Nb)$.
Since $\Nb'$ is split, we have $N''=N/N'$.
We write $N'_0$ and $N''_0$ for the compact open subgroups $N' \cap N_0$ and $N_0/N'_0$ of $N'$ and $N''$ respectively.
They are stable under conjugation by $M^+$.
We fix a set-theoretic section $[-] : N''_0 \hookrightarrow N_0$.

Since $\Nb'$ is commutative and $p$-torsion, $N'_0$ is a compact $\F_p$-vector space.
Thus for any open subgroup $N'_1$ of $N'_0$, the short exact sequence of compact $\F_p$-vector spaces
\begin{equation*}
0 \to N'_1 \to N'_0 \to N'_0/N'_1 \to 0
\end{equation*}
splits.
Indeed, it admits an $\F_p$-linear splitting (since $\F_p$ is a field) which is automatically continuous (since $N'_0/N'_1$ is discrete).
In particular with $N'_1=mN'_0m^{-1}$, we may and do fix a section $N'_0/mN'_0m^{-1} \hookrightarrow N'_0$.
We write $\overline{N'_0/mN'_0m^{-1}}$ for its image, so that $N'_0 = \overline{N'_0/mN'_0m^{-1}} \times mN'_0m^{-1}$, and $n' \mapsto \bar n'$ for the projection $N'_0 \twoheadrightarrow \overline{N'_0/mN'_0m^{-1}}$.
We set
\begin{equation*}
S' \coloneqq \sum_{\bar n' \in \overline{N'_0/mN'_0m^{-1}}} \bar n'm \in \F_p[M^+ \ltimes N'_0].
\end{equation*}
For all $n'_0 \in N'_0$, we have $n'_0=\bar n'_0(\bar n_0'^{-1}n'_0)$ with $\bar n_0'^{-1}n'_0 \in mN'_0m^{-1}$, thus
\begin{equation*}
n'_0S' = \sum_{\bar n' \in \overline{N'_0/mN'_0m^{-1}}} (\bar n'_0\bar n')m(m^{-1}(\bar n_0'^{-1}n'_0)m) = S'(m^{-1}(\bar n_0'^{-1}n'_0)m)
\end{equation*}
with $m^{-1}(\bar n_0'^{-1}n'_0)m \in N'_0$ (in the first equality we use the fact that $N'_0$ is commutative and in the second one we use the fact that $\overline{N'_0/mN'_0m^{-1}}$ is a group).
Therefore, there is an inclusion $\F_p[N'_0] S' \subseteq S' \F_p[N'_0]$.

The $R$-module $\pi^{N'_0}$, endowed with the induced action of $N''_0$ and the Hecke action of $M^+$ with respect to $N'_0$, is a smooth $R$-representation of $M^+ \ltimes N''_0$ (see the proof of \cite[Lemme 3.2.1]{JH} in degree $0$).
On $\pi^{N'_0}$, the Hecke action of $m$ with respect to $N'_0$ coincides with the action of $S'$ by definition.
On $(\pi^{N'_0})^{N''_0} = \pi^{N_0}$, the Hecke action of $m$ with respect to $N''_0$ coincides with the Hecke action of $m$ with respect to $N_0$ (see the proof of \cite[Lemme 3.2.2]{JH}) which is locally nilpotent by assumption.
Thus by the induction hypothesis, there exists a set of representatives $\overline{N''_0/mN''_0m^{-1}} \subseteq N''_0$ of the left cosets $N''_0/mN''_0m^{-1}$ such that the action of
\begin{equation*}
S \coloneqq \sum_{\bar n'' \in \overline{N''_0/mN''_0m^{-1}}} [\bar n''] S' \in \F_p[M^+ \ltimes N_0]
\end{equation*}
on $\pi^{N'_0}$ is locally nilpotent.
Moreover, there is an inclusion $\F_p[N'_0] S \subseteq S \F_p[N'_0]$ (because $N'_0$ is central in $N_0$ and $\F_p[N'_0] S' \subseteq S' \F_p[N'_0]$).

We prove the fact.
By \cite[Lemme 2.1]{JHB},
\begin{equation*}
\overline{N_0/mN_0m^{-1}} \coloneqq \{[\bar n''] \bar n' : \bar n'' \in \overline{N''_0/mN''_0m^{-1}}, \bar n' \in \overline{N'_0/mN'_0m^{-1}}\} \subseteq N_0
\end{equation*}
is a set of representatives of the left cosets $N_0/mN_0m^{-1}$, and by definition
\begin{equation*}
S = \sum_{\bar n \in \overline{N_0/mN_0m^{-1}}} \bar nm.
\end{equation*}
We prove that the action of $S$ on $\pi$ is locally nilpotent.
We proceed as in the proof of \cite[Théorème 5.1 (i)]{Hu12}.
Let $v \in \pi$ and set $\pi_r \coloneqq \F_p[N'_0] \cdot (S^r \cdot v)$ for all $r \geq 0$.
Since $\F_p[N'_0] S \subseteq S \F_p[N'_0]$, we have $\pi_{r+1} \subseteq S \cdot \pi_r$ for all $r \geq 0$.
Since $N'_0$ is compact, we have $\dim_{\F_p}(\pi_r)<\infty$ for all $r \geq 0$.
If $S^r \cdot v \neq 0$, i.e.\@ $\pi_r \neq 0$, for some $r \geq 0$, then $\pi_r^{N'_0} \neq 0$ (because $N'_0$ is a pro-$p$ group and $\pi_r$ is a non-zero $\F_p$-vector space) so that $\dim_{\F_p}(S \cdot \pi_r)<\dim_{\F_p}\pi_r$ (because the action of $S$ on $\pi^{N'_0}$ is locally nilpotent).
Therefore $\pi_r=0$, i.e.\@ $S^r \cdot v = 0$, for all $r \geq \dim_{\F_p}(\pi_0)$.

We prove the result.
The $R$-modules $\Hr^\bullet(N'_0,\pi)$, endowed with the induced action of $N''_0$ and the Hecke action of $M^+$, are smooth $R$-representations of $M^+ \ltimes N''_0$ (see the proof of \cite[Lemme 3.2.1]{JH}\footnote{We do not know whether \cite[Proposition 2.1.11]{Em2} holds true when $\car(F)=p$, but \cite[Lemme 3.1.1]{JH} does and any injective object of $\Mod_{M^+ \ltimes N_0}^\infty(R)$ is still $N_0$-acyclic.\label{foot}}).
At the level of cochains, the actions of $n'' \in N''_0$ and $m$ are explicitly given as follows.
For $\phi \in \Cr^j(N'_0,\pi)$, we have
\begin{gather}
(n'' \cdot \phi)(n'_0,\dots,n'_j) = [n''] \cdot \phi(n'_0,\dots,n'_j) \label{lift''} \\
(m \cdot \phi)(n'_0,\dots,n'_j) = S' \cdot \phi(m^{-1} n'_0 \bar n_0'^{-1}m,\dots,m^{-1} n'_j \bar n_j'^{-1}m) \label{lift'}
\end{gather}
for all $(n'_0,\dots,n'_j) \in N_0'^{j+1}$ (for \eqref{lift''} we use the fact that $N'_0$ is central in $N_0$, for \eqref{lift'} we use \eqref{lift} and the fact that $n' \mapsto \bar n'$ is a group homomorphism $N'_0 \to \overline{N'_0/mN'_0m^{-1}}$).
Using \eqref{lift''} and \eqref{lift'}, we can give explicitly the Hecke action of $m$ on $\Hr^\bullet(N'_0,\pi)^{N''_0}$ at the level of cochains as follows.
For $\phi \in \Cr^j(N'_0,\pi)$, we have
\begin{equation*}
(m \cdot \phi)(n'_0,\dots,n'_j) = S \cdot \phi(m^{-1} n'_0 \bar n_0'^{-1}m,\dots,m^{-1} n'_j \bar n_j'^{-1}m)
\end{equation*}
for all $(n'_0,\dots,n'_j) \in N_0'^{j+1}$.
Since the action of $S$ on $\pi$ is locally nilpotent and the image of a locally constant cochain is finite by compactness of $N'_0$, we deduce that the Hecke action of $m$ on $\Hr^j(N'_0,\pi)^{N''_0}$ is locally nilpotent for all $j \geq 0$.
Thus the Hecke action of $m$ on $\Hr^i(N''_0,\Hr^j(N'_0,\pi))$ is locally nilpotent for all $i,j \geq 0$ by the induction hypothesis.
We conclude using the spectral sequence of smooth $R$-representations of $M^+$
\begin{equation*}
\Hr^i(N''_0,\Hr^j(N'_0,\pi)) \Rightarrow \Hr^{i+j}(N_0,\pi)
\end{equation*}
(see the proof of \cite[Proposition 3.2.3]{JH} and footnote \ref{foot}).

Now, we prove the lemma without assuming $pR=0$.
We proceed by induction on the degree of nilpotency $r$ of $p$ in $R$.
If $r \leq 1$, then the lemma is already proved.
We assume $r>1$ and that we know the lemma for rings in which the degree of nilpotency of $p$ is $r-1$.
There is a short exact sequence of smooth $R$-representations of $M^+ \ltimes N_0$
\begin{equation*}
0 \to p\pi \to \pi \to \pi/p\pi \to 0.
\end{equation*}
Taking the $N_0$-cohomology yields a long exact sequence of smooth $R$-representations of $M^+$
\begin{equation} \label{devcoh}
0 \to (p\pi)^{N_0} \to \pi^{N_0} \to (\pi/p\pi)^{N_0} \to \Hr^1(N_0,p\pi) \to \cdots.
\end{equation}
If the Hecke action of $m$ on $\pi^{N_0}$ is locally nilpotent, then the Hecke action of $m$ on $(p\pi)^{N_0}$ is also locally nilpotent so that the Hecke action of $m$ on $\Hr^k(N_0,p\pi)$ is locally nilpotent for all $k \geq 0$ by the induction hypothesis (since $p\pi$ is an $R/p^{r-1}R$-module).
Using \eqref{devcoh}, we deduce that the Hecke action of $m$ on $(\pi/p\pi)^{N_0}$ is also locally nilpotent so that the Hecke action of $m$ on $\Hr^k(N_0,\pi/p\pi)$ is locally nilpotent for all $k \geq 0$ (since $\pi/p\pi$ is an $\F_p$-vector space).
Using again \eqref{devcoh}, we conclude that the Hecke action of $m$ on $\Hr^k(N_0,\pi)$ is locally nilpotent for all $k \geq 0$.
\end{proof}

\subsection{Proof of the main result}

We fix a compact open subgroup $N_0$ of $N$ and we let $M^+$ be the open submonoid of $M$ consisting of those elements $m$ contracting $N_0$ (i.e.\@ $mN_0m^{-1} \subseteq N_0$).
We let $\Zb_\Mb$ denote the centre of $\Mb$ and we set $Z_M^+ \coloneqq Z_M \cap M^+$.
We fix an element $z \in Z_M^+$ strictly contracting $N_0$ (i.e.\@ $\cap_{r \geq0} z^rN_0z^{-r}=1$).

Recall that the ordinary part of a smooth $R$-representation $\pi$ of $P$ is the smooth $R$-representation of $M$
\begin{equation*}
\Ord_P(\pi) \coloneqq (\Ind_{M^+}^M(\pi^{N_0}))^{Z_M-\lfin}
\end{equation*}
where $\Ind_{M^+}^M(\pi^{N_0})$ is defined as the $R$-module of functions $f : M \to \pi^{N_0}$ such that $f(mm') = m \cdot f(m')$ for all $m \in M^+$ and $m' \in M$, endowed with the action of $M$ by right translation, and the superscript $^{Z_M-\lfin}$ denotes the subrepresentation consisting of locally $Z_M$-finite elements (i.e.\@ those elements $f$ such that $R[Z_M] \cdot f$ is contained in a finitely generated $R$-submodule).
The action of $M$ on the latter is smooth by \cite[Remark 7.6]{VigAdj}.
If $R$ is artinian and $\pi^{N_0}$ is locally $Z_M^+$-finite (i.e.\@ it may be written as the union of finitely generated $Z_M^+$-invariant $R$-submodules), then there is a natural $R$-linear isomorphism
\begin{equation} \label{loc}
\Ord_P(\pi) \iso R[z^{\pm1}] \otimes_{R[z]} \pi^{N_0}
\end{equation}
(cf.\@ \cite[Lemma 3.2.1 (1)]{Em2}, whose proof also works when $\car(F)=p$ and over any artinian ring).

If $\sigma$ is a smooth $R$-representation of $M$, then the $R$-module $\Cc_\crm^\infty(N,\sigma)$ of locally constant functions $f : N \to \sigma$ with compact support, endowed with the action of $N$ by right translation and the action of $M$ given by $(m \cdot f) : n \mapsto m \cdot f(m^{-1}nm)$ for all $m \in M$, is a smooth $R$-representation of $P$.
Thus we obtain a functor $\Cc_\crm^\infty(N,-) : \Mod_M^\infty(R) \to \Mod_P^\infty(R)$.
It is $R$-linear, exact, and commutes with small direct sums.
The results of \cite[§~4.2]{Em1} hold true when $\car(F)=p$ and over any ring, thus the functors
\begin{gather*}
\Cc_\crm^\infty(N,-) : \Mod_M^\infty(R)^{Z_M-\lfin} \to \Mod_P^\infty(R) \\
\Ord_P : \Mod_P^\infty(R) \to \Mod_M^\infty(R)^{Z_M-\lfin}
\end{gather*}
are adjoint and the unit of the adjunction is an isomorphism.

\begin{lemm} \label{lemm:lnil}
Assume $R$ artinian, $p$ nilpotent in $R$, and $\car(F)=p$.
Let $\pi$ be a smooth $R$-representation of $P$.
If $\pi^{N_0}$ is locally $Z_M^+$-finite, then the Hecke action of $z$ on $\Hr^k(N_0,\pi)$ is locally nilpotent for all $k \geq 1$.
\end{lemm}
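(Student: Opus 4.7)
The plan is to mirror the induction on $\dim\Nb$ used in the proof of Lemma \ref{lemm:hecke}, ultimately reducing to a vector-group base case. The point is that Lemma \ref{lemm:hecke} already yields the conclusion (even for $k=0$) under the stronger hypothesis that Hecke-$z$ is \emph{locally nilpotent} on $\pi^{N_0}$; the weaker assumption here (local $Z_M^+$-finiteness) is offset by the weaker conclusion ($k\geq 1$ only). Intuitively, by Fitting's lemma (using $R$ artinian), each finite-length $R[z]$-submodule of $\pi^{N_0}$ splits canonically into a Hecke-$z$-nilpotent part, handled by Lemma \ref{lemm:hecke}, and a Hecke-$z$-invertible part, which must contribute nothing in positive cohomological degrees.

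The induction runs on $\dim\Nb$. The base case $\Nb=1$ is trivial since $\Hr^k(N_0,\pi)=0$ for $k\geq 1$. In the inductive step, pick a nontrivial central vector subgroup $\Nb'\subseteq\Nb$ and set $\Nb''=\Nb/\Nb'$, as in the proof of Lemma \ref{lemm:hecke}. The Hochschild--Serre spectral sequence of smooth $M^+$-representations
\[
\Hr^i(N''_0,\Hr^j(N'_0,\pi))\Rightarrow \Hr^{i+j}(N_0,\pi)
\]
reduces the claim to showing locally nilpotent Hecke-$z$ on each $E_2^{i,j}$ with $i+j\geq 1$. For $j=0$ and $i\geq 1$: the term is $\Hr^i(N''_0,\pi^{N'_0})$, and $(\pi^{N'_0})^{N''_0}=\pi^{N_0}$ is locally $Z_M^+$-finite by hypothesis, so the inductive hypothesis applies to $\pi^{N'_0}$ as an $M^+\ltimes N''_0$-representation. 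For $j\geq 1$ (any $i$): by Lemma \ref{lemm:hecke} applied to $\Hr^j(N'_0,\pi)$ as a smooth $M^+\ltimes N''_0$-representation, it suffices to prove locally nilpotent Hecke-$z$ on $(\Hr^j(N'_0,\pi))^{N''_0}$, or more directly on $\Hr^j(N'_0,\pi)$ itself.

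The main obstacle is thus the vector-group base case: for $\Nb'$ a central vector subgroup on which $z$ strictly contracts, establish locally nilpotent Hecke-$z$ on $\Hr^j(N'_0,\pi)$ for $j\geq 1$. The ingredients available in characteristic $p$ are: $N'_0$ is a compact $\mathbf{F}_p$-vector space; the subgroups $\{z^rN'_0z^{-r}\}_{r\geq 0}$ are cofinal in the open subgroups of $N'_0$; and the indices $[N'_0:z^rN'_0z^{-r}]$ are powers of $p$, hence nilpotent in $R$ for $r$ large. The heuristic is that, since $N'_0$ is abelian,
\[
h_{N'_0,z}^r(v)=\cor^{N'_0}_{z^rN'_0z^{-r}}\bigl(\mathrm{conj}_{z^r}(v)\bigr)
\]
can be expressed, after lifting $v$ from $\Hr^j(N'_0,\pi)$ to an element over the larger subgroup $z^{-r}N'_0z^r\subseteq N'$, as a restriction--corestriction composition producing $[N'_0:z^rN'_0z^{-r}]\cdot(\text{something})$, which vanishes in $R$ for $r$ large. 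Making this lifting precise in the non-compact ambient group $N'$ — using the cofinality of $\{z^{-r}N'_0z^r\}$ and a Fitting-type decomposition of the finite-length pieces of $\pi^{N'_0}$ (mirroring the iteration argument in the proof of Lemma \ref{lemm:hecke}) — is the remaining technical point.
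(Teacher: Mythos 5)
Your reduction via the Hochschild--Serre spectral sequence and Lemma \ref{lemm:hecke} is a reasonable skeleton, but the whole weight of the argument lands on the ``vector-group base case'' --- showing Hecke-$z$ is locally nilpotent on $\Hr^j(N'_0,\pi)$ for $j\geq 1$ --- and that step is not proved; the mechanism you sketch for it does not work. To write $h_{N'_0,z}^r(v)=\cor\circ\mathrm{res}$ of something, and hence as $[N'_0:z^rN'_0z^{-r}]\cdot(\cdots)$, you need to lift $v\in\Hr^j(N'_0,\pi)$ along the restriction map $\Hr^j(z^{-r}N'_0z^r,\pi)\to\Hr^j(N'_0,\pi)$, and restriction in (continuous) group cohomology is not surjective in general. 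Worse, if such a lifting argument did go through it would prove the conclusion for \emph{every} smooth $\pi$, with no use of the hypothesis that $\pi^{N_0}$ is locally $Z_M^+$-finite --- a strong sign the lifting is the false step, since that hypothesis is exactly what separates this lemma from an unconditional statement. Your Fitting-decomposition heuristic correctly identifies that the ``$z$-invertible part'' of $\pi^{N_0}$ is the problem (the nilpotent part being handled by Lemma \ref{lemm:hecke}), but you give no mechanism forcing that part to contribute nothing in positive degrees; the decomposition lives on $\pi^{N_0}$, not on $\pi$, and does not interact with $\Hr^{\geq 1}(N_0,\pi)$ by itself.

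The paper closes this gap by a quite different device: it realises the $z$-invertible part geometrically. Setting $\sigma=\Ord_P(\pi)$, the counit gives a map $\Cc_\crm^\infty(N,\sigma)\to\pi$ whose source is $N_0$-acyclic (being $\bigoplus_{n\in N/N_0}\Cc^\infty(nN_0,\sigma)$ over $N_0$) and which becomes an isomorphism on $N_0$-invariants after localising at $z$ --- this is where local $Z_M^+$-finiteness enters, via the identification \eqref{loc} of the localisation with $\Ord_P$. Consequently the kernel $\kappa$ and the cokernel $\pi/\iota$ of this map have \emph{locally nilpotent} Hecke-$z$ on their $N_0$-invariants, so Lemma \ref{lemm:hecke} applies to them in all degrees, and dimension-shifting ($\Hr^k(N_0,\iota)\cong\Hr^{k+1}(N_0,\kappa)$ for $k\geq 1$) plus the long exact sequences transport the conclusion to $\Hr^{k}(N_0,\pi)$ for $k\geq 1$. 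You would need to find a substitute for this acyclic resolution step (or an actual proof of surjectivity of the relevant restriction maps, which I do not expect to hold) before your route can be completed.
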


\begin{proof}
We set $\sigma \coloneqq \Ord_P(\pi)$.
The counit of the adjunction between $\Cc_\crm^\infty(N,-)$ and $\Ord_P$ induces a natural morphism of smooth $R$-representations of $P$
\begin{equation} \label{nat}
\Cc_\crm^\infty(N,\sigma) \to \pi.
\end{equation}
Taking the $N_0$-invariants yields a morphism of smooth $R$-representations of $M^+$
\begin{equation} \label{natinv}
\Cc_\crm^\infty(N,\sigma)^{N_0} \to \pi^{N_0}.
\end{equation}
By definition, $\sigma$ is locally $Z_M$-finite so it may be written as the union of finitely generated $Z_M$-invariant $R$-submodules $(\sigma_i)_{i \in I}$.
Thus $\Cc_\crm^\infty(N,\sigma)^{N_0}$ is the union of the finitely generated $Z_M^+$-invariant $R$-submodules $(\Cc^\infty(z^{-r}N_0z^r,\sigma_i)^{N_0})_{r \geq 0,i \in I}$, so it is locally $Z_M^+$-finite.
By assumption, $\pi^{N_0}$ is also locally $Z_M^+$-finite.
Therefore, using \eqref{loc} and its analogue with $\Cc_\crm^\infty(N,\sigma)$ instead of $\pi$, the localisation with respect to $z$ of \eqref{natinv} is the natural morphism of smooth $R$-representations of $M$
\begin{equation*}
\Ord_P(\Cc_\crm^\infty(N,\sigma)) \to \Ord_P(\pi)
\end{equation*}
induced by applying the functor $\Ord_P$ to \eqref{nat}, and it is an isomorphism since the unit of the adjunction between $\Cc_\crm^\infty(N,-)$ and $\Ord_P$ is an isomorphism.

Let $\kappa$ (resp.\@ $\iota$) be the kernel (resp.\@ image) of \eqref{nat}, hence two short exact sequences of smooth $R$-representations of $P$
\begin{gather}
0 \to \kappa \to \Cc_\crm^\infty(N,\sigma) \to \iota \to 0 \label{ses1} \\
0 \to \iota \to \pi \to \pi/\iota \to 0 \label{ses2}
\end{gather}
such that the third arrow of \eqref{ses1} and the second arrow of \eqref{ses2} fit into a commutative diagram of smooth $R$-representations of $P$
\begin{equation*}
\begin{tikzcd}
\Cc_\crm^\infty(N,\sigma) \drar[two heads] \ar[rr] && \pi \\
&\iota \urar[hook]&
\end{tikzcd}
\end{equation*}
whose upper arrow is \eqref{nat}.
Taking the $N_0$-invariants yields a commutative diagram of smooth $R$-representations of $M^+$
\begin{equation*}
\begin{tikzcd}
\Cc_\crm^\infty(N,\sigma)^{N_0} \drar \ar[rr] && \pi^{N_0} \\
&\iota^{N_0} \urar[hook]&
\end{tikzcd}
\end{equation*}
whose upper arrow is \eqref{natinv}.
Since the localisation with respect to $z$ of the latter is an isomorphism, the localisation with respect to $z$ of the injection $\iota^{N_0} \hookrightarrow \pi^{N_0}$ is surjective, thus it is an isomorphism (as it is also injective by exactness of localisation).
Therefore the localisation with respect to $z$ of the morphism $\Cc_\crm^\infty(N,\sigma)^{N_0} \to \iota^{N_0}$ is an isomorphism.

Since $\Cc_\crm^\infty(N,\sigma) \cong \bigoplus_{n \in N/N_0} \Cc^\infty(nN_0,\sigma)$ as a smooth $R$-representation of $N_0$, it is $N_0$-acyclic (see \cite[§~I.3]{NSW}).
Thus the long exact sequence of $N_0$-cohomology induced by \eqref{ses1} yields an exact sequence of smooth $R$-representations of $M^+$
\begin{equation} \label{les1a}
0 \to \kappa^{N_0} \to \Cc_\crm^\infty(N,\sigma)^{N_0} \to \iota^{N_0} \to \Hr^1(N_0,\kappa) \to 0
\end{equation}
and an isomorphism of smooth $R$-representations of $M^+$
\begin{equation} \label{les1b}
\Hr^k(N_0,\iota) \iso \Hr^{k+1}(N_0,\kappa)
\end{equation}
for all $k \geq 1$.
Since the localisation with respect to $z$ of the third arrow of \eqref{les1a} is an isomorphism, the Hecke action of $z$ on $\kappa^{N_0}$ is locally nilpotent.
Thus the Hecke action of $z$ on $\Hr^k(N_0,\kappa)$ is locally nilpotent for all $k \geq 0$ by Lemma \ref{lemm:hecke}.
Using \eqref{les1b}, we deduce that the Hecke action of $z$ on $\Hr^k(N_0,\iota)$ is locally nilpotent for all $k \geq 1$.

Taking the $N_0$-cohomology of \eqref{ses2} yields a long exact sequence of smooth $R$-representations of $M^+$
\begin{equation} \label{les2}
0 \to \iota^{N_0} \to \pi^{N_0} \to (\pi/\iota)^{N_0} \to \Hr^1(N_0,\iota) \to \cdots.
\end{equation}
Since the localisation with respect to $z$ of the second arrow is an isomorphism and the Hecke action of $z$ on $\Hr^1(N_0,\iota)$ is locally nilpotent, the Hecke action of $z$ on $(\pi/\iota)^{N_0}$ is locally nilpotent.
Thus the Hecke action of $z$ on $\Hr^k(N_0,\pi/\iota)$ is locally nilpotent for all $k \geq 0$ by Lemma \ref{lemm:hecke}.
We conclude using \eqref{les2} and the fact that the Hecke action of $z$ on $\Hr^k(N_0,\iota)$ is locally nilpotent for all $k \geq 1$.
\end{proof}

\begin{proof}[Proof of Theorem \ref{theo:exact}]
Assume $R$ artinian, $p$ nilpotent in $R$, and $\car(F)=p$.
Let
\begin{equation} \label{sespi}
0 \to \pi_1 \to \pi_2 \to \pi_3 \to 0
\end{equation}
be a short exact sequence of admissible $R$-representations of $G$.
Taking the $N_0$-invariants yields an exact sequence of smooth $R$-representations of $M^+$
\begin{equation} \label{sespicoh}
0 \to \pi_1^{N_0} \to \pi_2^{N_0} \to \pi_3^{N_0} \to \Hr^1(N_0,\pi_1).
\end{equation}
The terms $\pi_1^{N_0}, \pi_2^{N_0}, \pi_3^{N_0}$ are locally $Z_M^+$-finite (cf.\@ \cite[Theorem 3.4.7 (1)]{Em2}, whose proof in degree $0$ also works when $\car(F)=p$ and over any noetherian ring) and the Hecke action of $z$ on $\Hr^1(N_0,\pi_1)$ is locally nilpotent by Lemma \ref{lemm:lnil}.
Therefore, using \eqref{loc}, the localisation with respect to $z$ of \eqref{sespicoh} is the short sequence of admissible $R$-representations of $M$
\begin{equation*}
0 \to \Ord_P(\pi_1) \to \Ord_P(\pi_2) \to \Ord_P(\pi_3) \to 0
\end{equation*}
induced by applying the functor $\Ord_P$ to \eqref{sespi}, and it is exact by exactness of localisation.
\end{proof}

\subsection{Results on extensions}

We assume $R$ noetherian.
The $R$-linear category $\Mod_G^\adm(R)$ is not abelian in general, but merely exact in the sense of Quillen (\cite{Qui}).
An exact sequence of admissible $R$-representations of $G$ is an exact sequence of smooth $R$-representations of $G$
\begin{equation*}
\cdots \to \pi_{n-1} \to \pi_n \to \pi_{n+1} \to \cdots
\end{equation*}
such that the kernel and the cokernel of every arrow are admissible.
In particular, each term of the sequence is also admissible.

For $n \geq 0$ and $\pi,\pi'$ two admissible $R$-representations of $G$, we let $\Ext_G^n(\pi',\pi)$ denote the $R$-module of $n$-fold Yoneda extensions (\cite{Yon}) of $\pi'$ by $\pi$ in $\Mod_G^\adm(R)$, defined as equivalence classes of exact sequences
\begin{equation*}
0 \to \pi \to \pi_1 \to \cdots \to \pi_n \to \pi' \to 0.
\end{equation*}

We let $D(G)$ denote the derived category of $\Mod_G^\adm(R)$ (\cite{Nee,Kel,Buh}).
The results of \cite[§~III.3.2]{Ver} on the Yoneda construction carry over to this setting (see e.g.\@ \cite[Proposition A.13]{Pos}), hence a natural $R$-linear isomorphism
\begin{equation*}
\Ext_G^n(\pi',\pi) \cong \Hom_{D(G)}(\pi',\pi[n]).
\end{equation*}

\begin{proof}[Proof of Corollary \ref{coro:ext}]
Since $\Ind_{\bar P}^G$ and $\Ord_P$ are exact adjoint functors between $\Mod_M^\adm(R)$ and $\Mod_G^\adm(R)$ by Theorem \ref{theo:exact}, they induce adjoint functors between $D(M)$ and $D(G)$, hence natural $R$-linear isomorphisms
\begin{align*}
\Ext_M^n(\sigma,\Ord_P(\pi)) &\cong \Hom_{D(M)}(\sigma,\Ord_P(\pi)[n]) \\
&\cong \Hom_{D(G)}(\Ind_{\bar P}^G(\sigma),\pi[n]) \\
&\cong \Ext_G^n(\Ind_{\bar P}^G(\sigma),\pi)
\end{align*}
for all $n \geq 0$.
\end{proof}

\begin{rema}
We give a more explicit proof of Corollary \ref{coro:ext}.
The exact functor $\Ind_{\bar P}^G$ and the counit of the adjunction between $\Ind_{\bar P}^G$ and $\Ord_P$ induce an $R$-linear morphism
\begin{equation} \label{indext}
\Ext_M^n(\sigma,\Ord_P(\pi)) \to \Ext_G^n(\Ind_{\bar P}^G(\sigma),\pi).
\end{equation}
In the other direction, the exact (by Theorem \ref{theo:exact}) functor $\Ord_P$ and the unit of the adjunction between $\Ind_{\bar P}^G$ and $\Ord_P$ induce an $R$-linear morphism
\begin{equation} \label{ordext}
\Ext_G^n(\Ind_{\bar P}^G(\sigma),\pi) \to \Ext_M^n(\sigma,\Ord_P(\pi)).
\end{equation}
We prove that \eqref{ordext} is the inverse of \eqref{indext}.
For $n=0$ this is the unit-counit equations.
Assume $n \geq 1$ and let
\begin{equation} \label{extsigma}
0 \to \Ord_P(\pi) \to \sigma_1 \to \cdots \to \sigma_n \to \sigma \to 0
\end{equation}
be an exact sequence of admissible $R$-representations of $M$.
By \cite[§~3]{Yon}, the image of the class of \eqref{extsigma} under \eqref{indext} is the class of any exact sequence of admissible $R$-representations of $G$
\begin{equation} \label{extpi}
0 \to \pi \to \pi_1 \to \cdots \to \pi_n \to \Ind_{\bar P}^G(\sigma) \to 0
\end{equation}
such that there exists a commutative diagram of admissible $R$-representations of $G$
\begin{equation*}
\begin{tikzcd}[column sep=small]
0 \rar & \Ind_{\bar P}^G(\Ord_P(\pi)) \dar \rar & \Ind_{\bar P}^G(\sigma_1) \dar \rar & \cdots \rar & \Ind_{\bar P}^G(\sigma_n) \dar \rar & \Ind_{\bar P}^G(\sigma) \dar[equal] \rar & 0 \\
0 \rar & \pi \rar & \pi_1 \rar & \cdots \rar & \pi_n \rar & \Ind_{\bar P}^G(\sigma) \rar & 0
\end{tikzcd}
\end{equation*}
in which the upper row is obtained from \eqref{extsigma} by applying the exact functor $\Ind_{\bar P}^G$, the lower row is \eqref{extpi}, and the leftmost vertical arrow is the natural morphism induced by the counit of the adjunction between $\Ind_{\bar P}^G$ and $\Ord_P$.
Applying the exact functor $\Ord_P$ to the diagram and using the unit of the adjunction between $\Ind_{\bar P}^G$ and $\Ord_P$ yields a commutative diagram of admissible $R$-representations of $M$
\begin{equation*}
\begin{tikzcd}[column sep=small]
0 \rar &\Ord_P(\pi) \dar[equal] \rar & \sigma_1 \dar \rar & \cdots \rar & \sigma_n \dar \rar & \sigma \dar \rar & 0 \\
0 \rar & \Ord_P(\pi) \rar & \Ord_P(\pi_1) \rar & \cdots \rar & \Ord_P(\pi_n) \rar & \Ord_P(\Ind_{\bar P}^G(\sigma)) \rar & 0
\end{tikzcd}
\end{equation*}
in which the lower row is obtained from \eqref{extpi} by applying the exact functor $\Ord_P$, the upper row is \eqref{extsigma}, and the rightmost vertical arrow is the natural morphism induced by the unit of the adjunction between $\Ind_{\bar P}^G$ and $\Ord_P$.
The leftmost vertical morphism is the identity by the unit-counit equations.
Thus the image of the class of \eqref{extpi} under \eqref{ordext} is the class of \eqref{extsigma} by \cite[§~3]{Yon}.
We have proved that \eqref{ordext} is a left inverse of \eqref{indext}.
The proof that it is a right inverse is dual.
\end{rema}

\begin{coro} \label{coro:ind}
Assume $R$ artinian, $p$ nilpotent in $R$, and $\car(F)=p$.
Let $\sigma$ and $\sigma'$ be two admissible $R$-representations of $M$.
The functor $\Ind_{\bar P}^G$ induces an $R$-linear isomorphism
\begin{equation*}
\Ext_M^n(\sigma',\sigma) \iso \Ext_G^n(\Ind_{\bar P}^G(\sigma'),\Ind_{\bar P}^G(\sigma))
\end{equation*}
for all $n \geq 0$.
\end{coro}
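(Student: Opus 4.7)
The plan is to deduce this corollary as a direct consequence of Corollary \ref{coro:ext}. First I would apply Corollary \ref{coro:ext} with $\pi\coloneqq\Ind_{\bar P}^G(\sigma)$, which produces a natural $R$-linear isomorphism
\[
\Ext_M^n(\sigma',\Ord_P(\Ind_{\bar P}^G(\sigma))) \iso \Ext_G^n(\Ind_{\bar P}^G(\sigma'),\Ind_{\bar P}^G(\sigma)).
\]
Next I would pre-compose with the isomorphism on $\Ext_M^n(\sigma',-)$ induced by the unit $\eta_\sigma\colon\sigma\to\Ord_P(\Ind_{\bar P}^G(\sigma))$ of the adjunction between $\Ind_{\bar P}^G$ and $\Ord_P$, which is an isomorphism on admissible representations of $M$ (cf.\ \cite{Em2} and, in the generality of the present setting, \cite{AHV}).

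It then remains to check that the resulting composite $R$-linear isomorphism coincides with the morphism induced by the exact functor $\Ind_{\bar P}^G$. For this I would invoke the explicit description of \eqref{indext} given in the remark following Corollary \ref{coro:ext}: the image of an $n$-fold extension $\tau$ of $\sigma'$ by $\Ord_P(\Ind_{\bar P}^G(\sigma))$ is obtained by applying $\Ind_{\bar P}^G$ to $\tau$ and then pushing out along the counit $\epsilon\colon\Ind_{\bar P}^G\Ord_P(\Ind_{\bar P}^G(\sigma))\to\Ind_{\bar P}^G(\sigma)$. Taking $\tau$ to be the push-out of a given extension $\xi\in\Ext_M^n(\sigma',\sigma)$ along $\eta_\sigma$ and using exactness of $\Ind_{\bar P}^G$, the result is the push-out of $\Ind_{\bar P}^G(\xi)$ along $\epsilon\circ\Ind_{\bar P}^G(\eta_\sigma)=\mathrm{id}_{\Ind_{\bar P}^G(\sigma)}$ (by the unit-counit equation), which is $\Ind_{\bar P}^G(\xi)$ itself.

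The main obstacle, or rather the main non-formal input, is the fact that the unit $\eta_\sigma$ is an isomorphism on admissible representations: everything else is a purely formal manipulation once Corollary \ref{coro:ext} has been established. In the setting of the corollary ($R$ artinian, $p$ nilpotent, $\car(F)=p$), this is provided by \cite{AHV}, so no further work is needed.
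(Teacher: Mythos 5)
Your proposal is correct and follows essentially the same route as the paper: both compose the isomorphism of Corollary \ref{coro:ext} applied to $\pi=\Ind_{\bar P}^G(\sigma)$ with the isomorphism induced by the unit $\sigma \iso \Ord_P(\Ind_{\bar P}^G(\sigma))$ of the adjunction. Your additional verification (via the explicit Yoneda description and the unit--counit equation) that this composite really is the map induced by the functor $\Ind_{\bar P}^G$ is a point the paper leaves implicit, so it is a welcome supplement rather than a deviation.
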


\begin{proof}
The isomorphism in the statement is the composite
\begin{equation*}
\Ext_M^n(\sigma',\sigma) \iso \Ext_M^n(\sigma',\Ord_P(\Ind_{\bar P}^G(\sigma))) \iso \Ext_G^n(\Ind_{\bar P}^G(\sigma'),\Ind_{\bar P}^G(\sigma))
\end{equation*}
where the first isomorphism is induced by the unit of the adjunction between $\Ind_{\bar P}^G$ and $\Ord_P$, which is an isomorphism, and the second one is the isomorphism of Corollary \ref{coro:ext} with $\sigma'$ and $\Ind_{\bar P}^G(\sigma)$ instead of $\sigma$ and $\pi$ respectively.
\end{proof}

We fix a minimal parabolic subgroup $\Bb \subseteq \Gb$, a maximal split torus $\Sb \subseteq \Bb$, and we write $\Delta$ for the set of simple roots of $\Sb$ in $\Bb$.
We say that a parabolic subgroup $\Pb=\Mb\Nb$ of $\Gb$ is \emph{standard} if $\Bb \subseteq \Pb$ and $\Sb \subseteq \Mb$.
In this case, we write $\Delta_\Pb$ for the corresponding subset of $\Delta$, and given $\alpha \in \Delta_\Pb$ (resp.\@ $\alpha \in \Delta \backslash \Delta_\Pb$) we write $\Pb^\alpha=\Mb^\alpha\Nb^\alpha$ (resp.\@ $\Pb_\alpha=\Mb_\alpha\Nb_\alpha$) for the standard parabolic subgroup corresponding to $\Delta_\Pb \backslash \{\alpha\}$ (resp.\@ $\Delta_\Pb \sqcup \{\alpha\}$).

Let $C$ be an algebraically closed field of characteristic $p$.
Given a standard parabolic subgroup $P=MN$ and a smooth $C$-representation $\sigma$ of $M$, there exists a largest standard parabolic subgroup $P(\sigma)=M(\sigma)N(\sigma)$ such that the inflation of $\sigma$ to $P$ extends to a smooth $C$-representation $\prescript{\erm}{}{\sigma}$ of $P(\sigma)$, and this extension is unique (\cite[II.7 Corollary 1]{AHHV}).
We say that a smooth $C$-representation of $G$ is \emph{supercuspidal} if it is irreducible, admissible, and does not appear as a subquotient of $\Ind_P^G(\sigma)$ for any proper parabolic subgroup $P=MN$ of $G$ and any irreducible admissible $C$-representation $\sigma$ of $M$.
A \emph{supercuspidal standard $C[G]$-triple} is a triple $(P,\sigma,Q)$ where $P=MN$ is a standard parabolic subgroup, $\sigma$ is a supercuspidal $C$-representation of $M$, and $Q$ is a parabolic subgroup of $G$ such that $P \subseteq Q \subseteq P(\sigma)$.
To such a triple is attached in \cite{AHHV} a smooth $C$-representation of $G$
\begin{equation*}
\Ir_G(P,\sigma,Q) \coloneqq \Ind_{P(\sigma)}^G(\prescript{\erm}{}{\sigma} \otimes \St_Q^{P(\sigma)})
\end{equation*}
where $\St_Q^{P(\sigma)} \coloneqq \Ind_Q^{P(\sigma)}(1) / \sum_{Q \subsetneq Q' \subseteq P(\sigma)} \Ind_{Q'}^{P(\sigma)}(1)$ (here $1$ denotes the trivial $C$-representation) is the inflation to $P(\sigma)$ of the generalised Steinberg representation of $M(\sigma)$ with respect to $M(\sigma) \cap Q$ (\cite{GK,LySt}).
It is irreducible and admissible (\cite[I.3 Theorem 1]{AHHV}).

\begin{prop} \label{prop:triples}
Assume $\car(F)=p$.
Let $(P,\sigma,Q)$ and $(P',\sigma',Q')$ be two supercuspidal standard $C[G]$-triples.
If $Q \not \subseteq Q'$, then the $C$-vector space
\begin{equation*}
\Ext_G^1(\Ir_G(P',\sigma',Q'),\Ir_G(P,\sigma,Q))
\end{equation*}
is non-zero if and only if $P'=P$, $\sigma' \cong \sigma$, and $Q'=Q^\alpha$ for some $\alpha \in \Delta_Q$, in which case it is one-dimensional and the unique (up to isomorphism) non-split extension of $\Ir_G(P',\sigma',Q')$ by $\Ir_G(P,\sigma,Q)$ is the admissible $C$-representation of $G$
\begin{equation*}
\Ind_{P(\sigma)^\alpha}^G(\Ir_{M(\sigma)^\alpha}(M(\sigma)^\alpha \cap P,\sigma,M(\sigma)^\alpha \cap Q)).
\end{equation*}
\end{prop}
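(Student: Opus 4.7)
The strategy is to transfer the extension problem from $G$ to the Levi $M(\sigma')$ via the adjunction between $\Ind_P^G$ and its right adjoint $\Rr_P^G$. By the Remark after Theorem~\ref{theo:exact}, $\Rr_P^G$ is isomorphic to $\Ord_{\bar P}$ on admissibles, and Theorem~\ref{theo:exact} applied to $\bar P$ shows that this right adjoint is exact; the derived-category argument of Corollary~\ref{coro:ext}, with $P$ and $\bar P$ swapped, then yields for every $n \geq 0$ a natural $C$-linear isomorphism
\begin{equation*}
\Ext_G^n\bigl(\Ir_G(P',\sigma',Q'),\Ir_G(P,\sigma,Q)\bigr) \iso \Ext_{M(\sigma')}^n\bigl(\prescript{\erm}{}{\sigma'} \otimes \St_{Q'}^{P(\sigma')},\, \Ord_{\bar P(\sigma')}(\Ir_G(P,\sigma,Q))\bigr).
\end{equation*}

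The crucial step is then to compute $\Ord_{\bar P(\sigma')}(\Ir_G(P,\sigma,Q))$. Writing $\Ir_G(P,\sigma,Q)=\Ind_{P(\sigma)}^G(\prescript{\erm}{}{\sigma}\otimes\St_Q^{P(\sigma)})$ and filtering by the double cosets $\bar P(\sigma')\backslash G/P(\sigma)$, I would run a geometric-lemma analysis of ordinary parts in the spirit of \cite{JH}, now legitimate in characteristic $p$ thanks to Theorem~\ref{theo:exact}, examining each stratum separately. Supercuspidality of $\sigma$ and the maximality built into the definitions of $P(\sigma)$ and $P(\sigma')$ force every stratum to contribute zero except the one corresponding to the identity double coset, which is non-trivial precisely when $P'=P$ and $\sigma'\cong\sigma$; in that case the unit of the adjunction between $\Ind_{P(\sigma)}^G$ and $\Ord_{\bar P(\sigma)}$ is an isomorphism and identifies the ordinary part with $\prescript{\erm}{}{\sigma}\otimes\St_Q^{P(\sigma)}$. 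Applying the $P$-version of Corollary~\ref{coro:ind} (obtained by swapping $P$ and $\bar P$ throughout its statement and proof), the problem collapses to computing
\begin{equation*}
\Ext_{M(\sigma)}^1\bigl(\prescript{\erm}{}{\sigma}\otimes\St_{Q'}^{P(\sigma)},\,\prescript{\erm}{}{\sigma}\otimes\St_{Q}^{P(\sigma)}\bigr).
\end{equation*}

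Under the hypothesis $Q\not\subseteq Q'$, which forces $Q'\subsetneq Q$ since both are sandwiched between $P$ and $P(\sigma)$, I would run a dévissage on $\St_Q^{P(\sigma)}$ via its defining presentation as a quotient of $\Ind_Q^{P(\sigma)}(1)$, combined with Frobenius reciprocity and an induction on $|\Delta_Q\setminus\Delta_{Q'}|$, to show that this $\Ext^1$ vanishes unless $Q'=Q^\alpha$ for a single $\alpha\in\Delta_Q$ and is then one-dimensional. The non-split extension is produced from a natural short exact sequence of $P(\sigma)$-representations
\begin{equation*}
0 \to \St_Q^{P(\sigma)} \to \Ind_{P(\sigma)^\alpha}^{P(\sigma)}\bigl(\St_{M(\sigma)^\alpha\cap Q}^{M(\sigma)^\alpha}\bigr) \to \St_{Q^\alpha}^{P(\sigma)} \to 0
\end{equation*}
by tensoring with $\prescript{\erm}{}{\sigma}$ and inducing to $G$ via $\Ind_{P(\sigma)}^G$; transitivity of parabolic induction together with the tensor identity $\Ind(\prescript{\erm}{}{\sigma}\otimes-)\cong\prescript{\erm}{}{\sigma}\otimes\Ind(-)$, and the observation that $\sigma$ extends to all of $M(\sigma)^\alpha$ so that $\Ir_{M(\sigma)^\alpha}(M(\sigma)^\alpha\cap P,\sigma,M(\sigma)^\alpha\cap Q)=\prescript{\erm}{}{\sigma}\otimes\St_{M(\sigma)^\alpha\cap Q}^{M(\sigma)^\alpha}$, together identify the induced middle term with the expression in the statement. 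The main obstacle is the ordinary-part computation, which requires a careful Bruhat-cell stratification and crucially exploits the supercuspidality of $\sigma$ and the maximality of $P(\sigma)$ to annihilate every off-diagonal stratum.
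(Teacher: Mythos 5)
Your proposal takes a genuinely different route from the paper, and it has gaps that I do not think can be repaired without essentially reverting to the paper's argument. The paper never computes $\Ord_{\bar P(\sigma')}(\Ir_G(P,\sigma,Q))$ and never faces an $\Ext^1$ on a Levi: it resolves the \emph{source} by the short exact sequence \eqref{I}, observes that $\Ord_{\bar Q'}(\Ir_G(P,\sigma,Q))=0$ whenever $Q \not\subseteq Q'$ (this is the hard input, quoted from \cite[Theorem 1.1 (ii) and Corollary 4.13]{AHV}, not derived), so that Corollary \ref{coro:ext} kills $\Ext_G^n(\Ind_{Q'}^G(\sigma'),\Ir_G(P,\sigma,Q))$ for \emph{all} $n$; the long exact sequence then shifts $\Ext^1$ down to a $\Hom$ out of $\sum_{Q''}\Ind_{Q''}^G(\sigma')$, which is read off from its cosocle and \cite[I.3 Theorem 2]{AHHV}. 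This works uniformly because $Q \not\subseteq Q'$ forces $Q' \neq G$, so $\Ind_{Q'}^G$ is always a proper parabolic induction.

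The concrete problems with your version are the following. First, your claimed dichotomy for $\Ord_{\bar P(\sigma')}(\Ir_G(P,\sigma,Q))$ is false: by \cite[Theorem 1.1 (ii)]{AHV} the vanishing of $\Ord_{\bar Q''}(\Ir_G(P,\sigma,Q))$ is governed by whether $Q \subseteq Q''$, not by whether $P'=P$ and $\sigma'\cong\sigma$; taking $Q''=P(\sigma')$, the ordinary part is non-zero as soon as $Q \subseteq P(\sigma')$, which can happen with $(P',\sigma')\not\cong(P,\sigma)$ (in the extreme case $P(\sigma')=G$ your adjunction step is the identity and the reduction does not terminate at all). Second, $Q \not\subseteq Q'$ does \emph{not} force $Q' \subsetneq Q$: with $P'=P$ and $\sigma'\cong\sigma$, both $Q$ and $Q'$ lie between $P$ and $P(\sigma)$ but need not be comparable, and your dévissage must handle the incomparable case. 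Third, there is no ``geometric lemma for ordinary parts'' made legitimate by Theorem \ref{theo:exact}; exactness of $\Ord_P$ does not produce a Bruhat-cell filtration of $\Ord_{\bar P(\sigma')}\circ\Ind_{P(\sigma)}^G$, and the computation you would need is precisely the content of \cite[Theorem 1.1]{AHV}, which should be cited rather than re-proved in a sketch. Finally, even granting all of this, you are left with $\Ext^1_{M(\sigma)}(\prescript{\erm}{}{\sigma}\otimes\St_{Q'}^{P(\sigma)},\prescript{\erm}{}{\sigma}\otimes\St_{Q}^{P(\sigma)})$, a problem of the same nature as the original one ($\prescript{\erm}{}{\sigma}$ is an irreducible admissible representation, not a character, so it cannot simply be stripped off), whereas the paper's dimension shift reduces everything to $\Ext^0$. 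Your identification of the middle term of the non-split extension via transitivity of induction is fine and agrees with the paper's use of \cite[Theorem 3.2]{AHV}.
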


\begin{proof}
There is a natural short exact sequence of admissible $C$-representations of $G$
\begin{equation} \label{I}
\textstyle
0 \to \sum_{Q' \subsetneq Q'' \subseteq P(\sigma')} \Ind_{Q''}^G(\sigma') \to \Ind_{Q'}^G(\sigma') \to \Ir_G(P',\sigma',Q') \to 0.
\end{equation}
Note that we can restrict the sum to those $Q''$ that are minimal, i.e.\@ of the form $Q'_\alpha$ for some $\alpha \in \Delta_{P(\sigma')} \backslash \Delta_{Q'}$.
Moreover, we deduce from \cite[Theorem 3.2]{AHV} that its cosocle is isomorphic to $\bigoplus_{\alpha \in \Delta_{P(\sigma')} \backslash \Delta_{Q'}} \Ir_G(P',\sigma',Q'_\alpha)$.
Now if $Q \not \subseteq Q'$, then $\Ord_{\bar Q'}(\Ir_G(P,\sigma,Q))=0$ by \cite[Theorem 1.1 (ii) and Corollary 4.13]{AHV} so that using Corollary \ref{coro:ext}, we see that the long exact sequence of Yoneda extensions obtained by applying the functor $\Hom_G(-,\Ir_G(P,\sigma,Q))$ to \eqref{I} yields a natural $C$-linear isomorphism
\begin{multline*}
\textstyle
\Ext_G^{n-1}(\sum_{Q' \subsetneq Q'' \subseteq P(\sigma')} \Ind_{Q''}^G(\sigma'),\Ir_G(P,\sigma,Q)) \\
\iso \Ext_G^n(\Ir_G(P',\sigma',Q'),\Ir_G(P,\sigma,Q))
\end{multline*}
for all $n \geq 1$.
In particular, with $n=1$ and using the identification of the cosocle of the sum and \cite[I.3 Theorem 2]{AHHV}, we deduce that the $C$-vector space in the statement is non-zero if and only if $P'=P$, $\sigma' \cong \sigma$, and $Q=Q'_\alpha$ for some $\alpha \in \Delta_{P(\sigma')} \backslash \Delta_{Q'}$ (or equivalently $Q'=Q^\alpha$ for some $\alpha \in \Delta_Q$), in which case it is one-dimensional.
Finally, using again \cite[Theorem 3.2]{AHV}, we see that for all $\alpha \in \Delta_Q$ the admissible $C$-representation of $G$ in the statement is a non-split extension of $\Ir_G(P,\sigma,Q^\alpha)$ by $\Ir_G(P,\sigma,Q)$.
\end{proof}

\begin{coro} \label{coro:sc}
Assume $\car(F)=p$.
Let $\pi$ and $\pi'$ be two irreducible admissible $C$-representations of $G$.
If $\pi$ is supercuspidal and $\pi'$ is not the extension to $G$ of a supercuspidal representation of a Levi subgroup of $G$, then $\Ext_G^1(\pi',\pi)=0$.
\end{coro}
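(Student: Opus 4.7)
The plan is to deduce the corollary from Proposition \ref{prop:triples} after translating both hypotheses into conditions on the classifying triples of \cite[I.3 Theorem 2]{AHHV}. Write $\pi \cong \Ir_G(P,\sigma_0,Q)$ and $\pi' \cong \Ir_G(P',\sigma',Q')$ for the unique supercuspidal standard $C[G]$-triples attached to $\pi$ and $\pi'$. Since $\pi$ is supercuspidal, the requirement that $\sigma_0$ be supercuspidal of $M$ combined with the uniqueness of the classification forces $P = Q = G$ and $\sigma_0 \cong \pi$ (a supercuspidal $C$-representation of $G$ itself): any triple with $P \subsetneq G$ produces an irreducible admissible representation appearing as a constituent of a proper parabolic induction, and hence not supercuspidal of $G$.

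The next step is to interpret the hypothesis on $\pi'$. I claim that $\pi'$ is the extension to $G$ of a supercuspidal representation of a Levi subgroup if and only if $Q' = G$. Indeed, if $Q' = G$, then $Q' \subseteq P(\sigma')$ forces $P(\sigma') = G$, so $\St_{Q'}^{P(\sigma')} = 1$ and $\Ir_G(P',\sigma',G) = \prescript{\erm}{}{\sigma'}$ is the extension to $G$ of the supercuspidal representation $\sigma'$ of the Levi $M'$; conversely, any such extension is of this form by uniqueness of the classifying triple. The hypothesis on $\pi'$ thus translates to $Q' \subsetneq G$.

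We now have $Q = G \not\subseteq Q'$, so Proposition \ref{prop:triples} applies directly. Non-vanishing of $\Ext_G^1(\pi',\pi)$ would force $P' = P = G$; but then the triple condition $P' \subseteq Q' \subseteq P(\sigma')$ forces $Q' = G$, contradicting $Q' \subsetneq G$. Therefore $\Ext_G^1(\pi',\pi) = 0$. I do not foresee any real obstacle here: once the dictionary between the corollary's hypotheses and the triple data is in place, Proposition \ref{prop:triples} finishes the argument in one line, and the only observation of substance is the characterization of extensions from a Levi as precisely the triples with $Q' = G$.
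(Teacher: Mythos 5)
Your proposal is correct and follows essentially the same route as the paper: translate the hypotheses into $P=Q=G$ and $Q'\neq G$ via the classification by supercuspidal standard triples, note that $Q \not\subseteq Q'$, and conclude from Proposition \ref{prop:triples} since $P'=P=G$ would force $Q'=G$. The paper phrases the last step as $P \neq P'$, but the underlying argument is identical.
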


\begin{proof}
By \cite[I.3 Theorem 3]{AHHV}, there exist two supercuspidal standard $C[G]$-triples $(P,\sigma,Q)$ and $(P',\sigma',Q')$ such that $\pi \cong \Ir_G(P,\sigma,Q)$ and $\pi' \cong \Ir_G(P',\sigma',Q')$.
The assumptions on $\pi$ and $\pi'$ are equivalent to $P=G$ and $Q' \neq G$.
In particular, $Q \not \subseteq Q'$ and $P \neq P'$ so that $\Ext_G^1(\pi',\pi)=0$ by Proposition \ref{prop:triples}.
\end{proof}

\bibliographystyle{amsalpha}
\bibliography{exactitude}

\end{document}